\documentclass{amsart}
\usepackage[utf8]{inputenc}
\usepackage{amsmath,soul}
\usepackage{amsthm}
\usepackage{amsfonts}
\usepackage{amssymb}
\usepackage{enumerate}
\usepackage{tikz}
\usepackage{float}
\usepackage{wrapfig}
\usepackage[foot]{amsaddr}
\usepackage{pgfplots}
\pgfplotsset{compat=1.9}
\usepackage[backend=biber, style=numeric-comp]{biblatex}
\addbibresource{main.bib}

\pagestyle{plain}
\usepackage{caption}
\allowdisplaybreaks
\newcommand{\met}{d}
\newcommand{\metset}{\mathcal{M}}
\theoremstyle{plain}
\newtheorem{theorem}{Theorem}
\newtheorem{lemma}{Lemma}
\newtheorem{proposition}{Proposition}
\newtheorem{cor}{Corollary}
\theoremstyle{definition}
\newtheorem{definition}{Definition}
\theoremstyle{remark}
\newtheorem{remark}{Remark}
\newtheorem{example}{Example}
\title{Common best proximity point theorems under proximal $F$-weak dominance in complete metric spaces}
\author[1]{Aman Deep}
\address[1]{Department of Mathematics, University of Delhi, Delhi-110007}
\email{amansharmacpl001@gmail.com}

\author[2]{Rakesh Batra}
\email{rakeshbatra.30@gmail.com}
\address[2]{Department of Mathematics, Hansraj College, University of Delhi, Delhi-110007}

\subjclass[2020]{54H25, 47H10, 55M20}
\keywords{Best proximity point, $F$-dominance, $F$-weak dominance, proximal $F$-weak dominance, fixed point}
\begin{document}
\begin{abstract}
    Suppose that $S_1$ and $S_2$ are nonempty subsets of a complete metric space $(\metset,\met)$ and $\phi,\psi:S_1\to S_2$ are mappings. The aim of this work is to investigate some conditions on $\phi$ and $\psi$ such that the two functions, one that assigns to each $x\in S_1$ exactly $\met(x,\phi x)$ and the other that assigns to each $x\in S_1$ exactly $\met(x,\psi x)$, attain the global minimum value at the same point in $S_1$. We have introduced the notion of proximally $F$-weakly dominated pair of mappings and proved two theorems that guarantee the existence of such a point. Our work is an improvement of earlier work in this direction. We have also provided examples in which our results are applicable, but the earlier results are not applicable.
\end{abstract}
\maketitle


\section{Introduction}
After the discovery of contraction mapping theorem by Stefan Banach in 1922, it became apparent that it was a very useful result. This prompted other scholars towards finding similar results, and thus the vast field of metric fixed point theory was born. For a detailed discussion on such theorems, one may refer to \cite{handbook} and the references therein.

One of the recent generalizations of contraction mapping theorem was given by Wardowski \cite{wardowski2012fixed}, who defined the so called $F$-contraction mappings. He not only proved that every $F$-contraction satisfies the conclusion of contraction mapping theorem, but also produced an example of a mapping which is an $F$-contraction but does not satisfy the hypothesis of contraction mapping theorem. Many scholars tried to generalize the concept of an $F$-contraction in many directions. The recent survey by Karapinar \textit{et al.} \cite{karapinar2020survey} contains a detailed discussion of various generalizations of $F$-contractions.

Suppose that $S_1$ and $S_2$ are nonempty subsets of a metric space $(\metset,\met)$ and $\phi:S_1\to S_2$ be a mapping. If $S_1\cap S_2=\emptyset$ then we cannot talk about a fixed point of $\phi$. In such a case, we search for points in $S_1$ such that the error $\met(x,\phi x)$ is as small as possible. The initial breakthrough in this direction was obtained by Ky Fan in 1969, who proved the following theorem.
\begin{theorem}[\cite{fan1969extensions}]
\label{fan-theorem}
Let $S$ be a nonempty compact convex set in a locally convex real  topological vector space $\metset$. Let $\phi:S\to \metset$ be a continuous mapping. Then either $\phi$ has a fixed point in $S$ or there exists a point $x$ in $S$ and a continuous seminorm $p$ on $\metset$ such that
$$0<p(x-\phi x)=\inf_{y\in S}p(y-\phi x).$$
\end{theorem}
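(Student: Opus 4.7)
The plan is to argue by contradiction. Assume $\phi$ has no fixed point in $S$, so that the compact set $K = \{x - \phi x : x \in S\}$ (the continuous image of $S$) does not contain $0$. The decisive first step is to produce a \emph{single} continuous seminorm $p$ on $\metset$ which is uniformly bounded below on $K$: local convexity lets me pick a convex, balanced, open neighborhood $U$ of $0$ with $U \cap K = \emptyset$, and its Minkowski functional $p = p_U$ is then a continuous seminorm satisfying $p(x - \phi x) \geq 1$ for every $x \in S$. I would then suppose for contradiction that the conclusion of the theorem fails for this particular $p$, meaning $\inf_{y \in S} p(y - \phi x) < p(x - \phi x)$ holds for \emph{every} $x \in S$.

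Under this assumption, for each $x \in S$ the compactness of $S$ and continuity of $y \mapsto p(y - \phi x)$ let me pick $y_x \in S$ with $p(y_x - \phi x) < p(x - \phi x)$, and the relatively open set $U_x = \{z \in S : p(y_x - \phi z) < p(z - \phi z)\}$ contains $x$. Extracting a finite subcover $U_{x_1}, \ldots, U_{x_n}$ of $S$ and choosing a continuous partition of unity $\{\alpha_i\}_{i=1}^n$ subordinate to it, I would define $g : S \to S$ by $g(z) = \sum_{i=1}^n \alpha_i(z)\, y_{x_i}$, which lies in $S$ by convexity. Tychonoff's fixed point theorem for compact convex subsets of a locally convex Hausdorff TVS then produces $x^* \in S$ with $x^* = g(x^*)$.

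The contradiction is extracted as follows. Since $\sum_i \alpha_i(x^*) = 1$, we obtain $x^* - \phi x^* = \sum_i \alpha_i(x^*)(y_{x_i} - \phi x^*)$, so the sublinearity of $p$ gives $p(x^* - \phi x^*) \leq \sum_i \alpha_i(x^*)\, p(y_{x_i} - \phi x^*)$. Whenever $\alpha_i(x^*) > 0$ the point $x^*$ lies in $U_{x_i}$, so $p(y_{x_i} - \phi x^*) < p(x^* - \phi x^*)$; summing these strict inequalities with non-negative weights adding to $1$ yields $p(x^* - \phi x^*) < p(x^* - \phi x^*)$, which is absurd. Consequently the conclusion must hold for our chosen seminorm $p$.

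The part I expect to be most delicate is the very first reduction to a \emph{single} seminorm $p$. The naive local argument yields a different seminorm at each point, and with a heterogeneous family of seminorms the final inequality in the partition-of-unity step no longer telescopes, because $p_{x_i}(y_{x_i} - \phi x^*) < p_{x_i}(x^* - \phi x^*)$ cannot be combined across $i$ into a single scalar estimate. Passing through the compactness of $K$ and a single convex balanced separating neighborhood of $0$ is therefore the observation that makes the Tychonoff-plus-partition-of-unity scheme close.
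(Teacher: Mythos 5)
The paper does not prove this statement: Theorem \ref{fan-theorem} is quoted from Fan's 1969 paper purely as historical background in the introduction, so there is no in-paper proof to compare yours against. Judged on its own, your argument is correct and is essentially the classical proof of Fan's best approximation theorem. The reduction to a single seminorm is sound: if $\phi$ has no fixed point, $K=\{x-\phi x:x\in S\}$ is compact and misses $0$; since $\metset$ is (implicitly) Hausdorff, $K$ is closed, so local convexity yields a convex balanced open neighbourhood $U$ of $0$ disjoint from $K$, and its Minkowski functional $p=p_U$ is a continuous seminorm with $p\geq 1$ on $K$ because $U=\{z:p_U(z)<1\}$ for an open convex $U$ containing $0$. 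The remainder --- the relatively open sets $U_x$, the finite subcover, a partition of unity subordinate to it on the compact Hausdorff set $S$, the convex combination map $g$, a fixed point $x^*$ of $g$, and the strict-inequality averaging that yields $p(x^*-\phi x^*)<p(x^*-\phi x^*)$ --- is the standard scheme and closes correctly. Two minor remarks. First, the Hausdorff hypothesis is used twice (closedness of the compact set $K$, and the fixed point theorem); it is implicit in the statement as given and in Fan's original. Second, you do not need the full strength of Tychonoff's theorem: $g(S)$ lies in the convex hull of the finitely many points $y_{x_1},\dots,y_{x_n}$, a compact convex subset of a finite-dimensional affine subspace, so Brouwer's theorem already produces $x^*$; likewise the partition of unity can be written explicitly as $\alpha_i(z)=\lambda_i(z)/\sum_j\lambda_j(z)$ with $\lambda_i(z)=\max\{0,\;p(z-\phi z)-p(y_{x_i}-\phi z)\}$, which avoids any appeal to normality and makes the implication ``$\alpha_i(x^*)>0\Rightarrow x^*\in U_{x_i}$'' immediate. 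Your closing observation about why the argument requires one fixed seminorm rather than a point-dependent family is accurate.
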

In the modern terminology, theorems which guarantee a conclusion similar to that of Theorem \ref{fan-theorem} are called best approximation theorems. On the other hand, the theorems which guarantee the existence of a point $x\in S_1$ such that $\met(x,\phi x)=\met(S_1,S_2)=\inf\{\met(s_1,s_2):s_1\in S_1,s_2\in S_2\}$ are called best proximity point theorems. Some early work in this direction was due to Anuradha and Veeramani \cite{anuradha2009proximal}, Saddiq Basha \cite{basha2011best} and  Eldred \textit{et al.} \cite{eldred2005proximal}.

Recently, R. Batra \cite{batracommon} defined the so called proximally $F$-dominated pair of mappings and proved a common best proximity point theorem for such mappings. In doing so, he generalized the earlier work of Saddiq Basha \cite{basha2013common}. The main objective of our paper is to considerably weaken the notion of proximally $F$-dominated pair of mappings and to prove common best proximity point theorems for such mappings.

\section{Preliminaries}
We shall use the symbols $\mathbb{N}$, $\mathbb{R}$ and $\mathbb{R_+}$ to denote the set of non-negative integers, the set of real numbers and the set of positive real numbers, respectively. Convergence of a sequence $\{x_i\}_{i\in\mathbb{N}}$ to $x$ as $i\to \infty $ in a metric space will often be denoted by $x_i\to x$. The value of a function $\phi$ at $x$ will be denoted by $\phi x$ instead of $\phi(x)$ when there is no possibility of confusion. If $S_1$ and $S_2$ are sets, then the expression $S_1\subset S_2$ implies that $S_1$ is a subset of $S_2$. If $\phi:S_1\to S_2$ and $\psi:S_3\to S_4$ are functions such that $S_2\subset S_3$, then, $\psi\circ\phi:S_1\to S_4$ denotes the composition of functions, defined by $\psi\circ\phi x=\psi(\phi x)$ for all $x\in S_1$. The composition of a function $\phi:S \to S$ with itself $n$ times will be denoted by $\phi^n$, with the understanding that $\phi^0$ is the identity mapping of $S$. If $\phi:S_1\to\ S_2$ is a function, and $S\subset S_1$ then we denote by $\phi_{|S}$ the mapping $\phi_{|S}:S\to \phi(S)$ defined by $\phi_{|S}x=\phi x$ for all $x\in S$.

Let $S_1$ and $S_2$ be nonempty subsets of a metric space $(\metset,\met)$. Following S. Basha \cite{basha2013common}, we shall use the notations: $\met(S_1,S_2)$ as the infimum of the set of all possible distances between two points, one each from $S_1$ and $S_2$; and $S_1^0$(respectively, $S_2^0$) as the set of all $s_1\in S_1$(respectively, $s_2\in S_2$) such that $\met(S_1,S_2)$ is attained by $d$ at $(s_1,s_2)$ for some $s_2\in S_2$(respectively, $s_1\in S_1$). 

Now, we recall a well known notion of a closed mapping.
\begin{definition}[\cite{aliprantis} p. 51]
Let $(\metset_1,\met_1)$ and $(\metset_2,\met_2)$ be metric spaces and $\phi:\metset_1\to \metset_2$ be a mapping. We say that $\phi$ is closed if for every sequence $\{x_i\}_{i\in\mathbb{N}}$ in $\metset_1$,
$$[x_i\to x\in \metset_1 \text{ and } \phi x_i\to y\in \metset_2]\Rightarrow \phi x=y.$$
\end{definition}
\begin{remark}[\cite{aliprantis} p. 51]
\label{continuous-are-closed}
It is well known that every continuous mapping is closed, but the converse is not true.
\end{remark}
The notion of proximal commutativity will play a prominent role throughout the paper.
\begin{definition}[\cite{basha2013common}]
Let $S_1$ and $S_2$ be nonempty subsets of a metric space $(\metset,\met)$. A pair of mappings $\phi,\psi:S_1\to S_2$ is said to be proximally commuting, if
$$\left [\met(a,\phi x)=\met(b,\psi x)=\met(S_1,S_2)\right ] \Rightarrow \phi b=\psi a$$ for all $a, b, x\in S_1$.
\end{definition}
It is easy to see that if $S_1=S_2$, then $\phi$ and $\psi$ are proximally commuting if and only if they are commuting. Therefore, proximal commutativity is a generalization of usual commutativity.

Now, we recall the definition of common best proximity point.
\begin{definition}[\cite{basha2013common}]
Given any pair of nonempty subsets $S_1$ and $S_2$ of a metric space $(\metset,\met)$ and a pair of mappings $\phi,\psi:S_1\to S_2$, a point $x^*\in S_1$ is called a common best proximity point of $\phi$ and $\psi$ if $\met(x^*,\phi x^*)=\met(S_1,S_2)$ and $\met(x^*,\psi x^*)=\met(S_1,S_2)$.
\end{definition}
Since, the mappings $x\to \met(x,\phi x)$ and $x\to \met(x,\psi x)$ are generally nonlinear and $\met(S_1,S_2)\leq \met(x,\phi x)$ for all $x\in S_1$, hence, a common best proximity point problem may be regarded as a global nonlinear optimization problem.
In \cite{wardowski2012fixed}, Wardowski used a class of functions $F:\mathbb{R}_+\to\mathbb{R}$ satisfying the following properties to define the so called $F$-contractions.
\begin{enumerate}[({F}1)]
    \item $F$ is strictly increasing i.e. $x<y\Rightarrow Fx<Fy$ for all $x,y\in \mathbb{R}_+$;
    \item for every sequence $\{\alpha_i\}_{i\in\mathbb{N}}$ of positive real numbers, $\alpha_i\to 0\iff F\alpha_i\to -\infty$;
    \item there exists $k\in (0,1)$ such that $\lim_{\alpha\to 0^+}\alpha^kF\alpha=0.$
\end{enumerate}
We denote by $\mathcal{F}$ the class of all mappings $F:\mathbb{R}_+\to\mathbb{R}$ satisfying the conditions (F1)-(F3).
\begin{example}[\cite{wardowski2012fixed}]
\label{F-examples}
Consider the mappings $F_i:\mathbb{R}_+\to\mathbb{R}(i=1,2,3,4)$ defined by
\begin{enumerate}[(i)]
    \item $F_1\alpha=\ln{\alpha}$;
    \item $F_2\alpha=\ln{\alpha}+\alpha$;
    \item $F_3\alpha=\frac{-1}{\sqrt{\alpha}}$;
    \item $F_4\alpha=\ln(\alpha^2+\alpha).$
\end{enumerate}
It was ensured by Wardowski that $F_1,F_2,F_3,F_4\in\mathcal{F}$.
\end{example}
R. Batra \cite{batracommon} defined the concept of proximally $F$-dominated pair of mappings as follows:
\begin{definition}[\cite{batracommon}]
\label{F-dominating}
Given any $F\in\mathcal{F}$ and two nonempty subsets $S_1$ and $S_2$ of a metric space $(\metset,\met)$, a function $\psi:S_1\to S_2$ is said to $F$-dominate another function $\phi:S_1\to S_2$ proximally, if there exists a real number $\tau>0$ such that
\begin{multline*}
\left. \begin{array}{rll}
      \met(a_1,\phi x_1)=\met(a_2,\phi x_2)&=\met(S_1,S_2)\\
      \met(b_1,\psi x_1)=\met(b_2,\psi x_2)&=\met(S_1,S_2)\\
       &a_1\neq a_2
\end{array}\right\}\Rightarrow \left \{
\begin{array}{ll}
     &b_1\neq b_2\\
     & \tau+F(\met(a_1,a_2))\leq F(\met(b_1,b_2))
\end{array} \right.
\end{multline*}
for all $a_1,a_2,b_1,b_2,x_1,x_2\in S_1$.
\end{definition}
For all $a_1,a_2,b_1,b_2\in S_1$, let us define
$$M_\met(a_1,a_2,b_1,b_2)=\max\left\{\met(b_1,b_2),\met(a_2,b_2),\met(a_1,b_1),\frac{\met(a_2,b_1)+\met(a_1,b_2)}{2}\right\}.$$
  In \cite{wardowski2014weak}, Wardowski and Dung introduced the concept of $F$-weak contractions as follows:
\begin{definition}[\cite{wardowski2014weak}]
Let $(\metset,\met)$ be a metric space and $F\in\mathcal{F}$. Let $\phi:\metset\to \metset$. We say that $\phi$ is an $F$-weak contraction if there exists $\tau>0$ such that
\begin{equation*}
    \phi x_1\neq \phi x_2\Rightarrow \tau+F(\met(\phi x_1,\phi x_2))\leq F(M_\met(\phi x_1,\phi x_2,x_1,x_2))
\end{equation*}
for all $x_1,x_2\in \metset$.
\end{definition}
The following important generalization of contraction mapping theorem was also proved in \cite{wardowski2014weak}.
\begin{theorem}[\cite{wardowski2014weak}]
\label{F-weak-wardowski}
Let $F\in\mathcal{F}$ and $(\metset,\met)$ be a complete metric space. Let $\phi:\metset \to \metset$ be an $F$-weak contraction. If $\phi$ or $F$ is continuous, then $\phi$ has a unique fixed point $x^*$ in $\metset$. Moreover, for any $x_0\in \metset$, we have $\lim_{i\to\infty} \phi^ix_0=x^*$.
\end{theorem}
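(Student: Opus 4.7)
The plan is to carry out a Picard iteration from an arbitrary starting point and extract a fixed point via properties (F2) and (F3). Fix $x_0\in\metset$, set $x_{i+1}=\phi x_i$, and abbreviate $\delta_i=\met(x_i,x_{i+1})$. If $\delta_i=0$ for some $i$ we are done, so I may assume $\delta_i>0$ for every $i$. Applying the $F$-weak contraction condition to the pair $(x_{i-1},x_i)$, I compute
$$M_\met(\phi x_{i-1},\phi x_i,x_{i-1},x_i)=\max\left\{\delta_{i-1},\delta_i,\frac{\met(x_{i-1},x_{i+1})}{2}\right\}\le\max\{\delta_{i-1},\delta_i\},$$
where the inequality uses the triangle bound $\met(x_{i-1},x_{i+1})\le\delta_{i-1}+\delta_i$. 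Thus $\tau+F(\delta_i)\le F(\max\{\delta_{i-1},\delta_i\})$; since $F$ is strictly increasing by (F1) and $\tau>0$, the maximum cannot equal $\delta_i$, leaving $\tau+F(\delta_i)\le F(\delta_{i-1})$, so by iteration $F(\delta_i)\le F(\delta_0)-i\tau$ for all $i$.

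By (F2), this chained inequality forces $\delta_i\to 0$. To upgrade this to Cauchyness I would use (F3): pick $k\in(0,1)$ with $\alpha^kF\alpha\to 0$ as $\alpha\to 0^+$. Multiplying $F(\delta_i)\le F(\delta_0)-i\tau$ by $\delta_i^k>0$ and rearranging gives
$$0\le i\tau\delta_i^k\le\delta_i^kF(\delta_0)-\delta_i^kF(\delta_i),$$
whose right side tends to $0$ by (F3) together with $\delta_i\to 0$. Hence $i\delta_i^k\to 0$, which forces $\delta_i=o(i^{-1/k})$ with $1/k>1$, so $\sum\delta_i<\infty$. This makes $\{x_i\}$ Cauchy, and by completeness $x_i\to x^*$ for some $x^*\in\metset$.

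To identify $x^*$ as a fixed point I would split on the continuity hypothesis. If $\phi$ is continuous, then $\phi x^*=\lim\phi x_i=\lim x_{i+1}=x^*$. If instead $F$ is continuous, I argue by contradiction: supposing $\met(x^*,\phi x^*)>0$, one has $\phi x_i=x_{i+1}\to x^*\ne\phi x^*$, so $\phi x_i\ne\phi x^*$ for all sufficiently large $i$; applying the $F$-weak contraction to the pair $(x_i,x^*)$, the four terms inside $M_\met(\phi x_i,\phi x^*,x_i,x^*)$ converge respectively to $0$, $\met(x^*,\phi x^*)$, $0$, and $\met(x^*,\phi x^*)/2$, so the maximum converges to $\met(x^*,\phi x^*)$; passing to the limit using continuity of $F$ produces $\tau+F(\met(x^*,\phi x^*))\le F(\met(x^*,\phi x^*))$, a contradiction. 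Uniqueness is a one-liner: for two distinct fixed points $x^*,y^*$, the condition $\phi x^*\ne\phi y^*$ holds and $M_\met(x^*,y^*,x^*,y^*)$ collapses to $\met(x^*,y^*)$, again contradicting $\tau>0$.

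The main obstacle I anticipate is the $F$-continuous case: one must verify eligibility of the contraction inequality (i.e.\ $\phi x_i\ne\phi x^*$) along a cofinite set of indices and must also carefully identify the limit of $M_\met$, which requires tracking each of its four arguments separately rather than a single dominating term. Everything else, including the (F3)-based Cauchy extraction, is routine bookkeeping once $F(\delta_i)\le F(\delta_0)-i\tau$ is in hand.
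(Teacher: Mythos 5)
The paper states this theorem as a quoted result from \cite{wardowski2014weak} and gives no proof of its own, so there is nothing internal to compare against; your argument is correct and is essentially the standard proof from that reference. It is also the same machinery (the telescoped inequality $F(\delta_i)\le F(\delta_0)-i\tau$, then (F2) and the (F3)-based bound $i\delta_i^k\to 0$ to get a convergent dominating series) that the authors themselves reuse in the proof of Lemma \ref{sequence-lemma}, and your handling of the two continuity cases and of uniqueness is sound.
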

\section{Main Results}
Taking inspiration from \cite{wardowski2014weak}, we define the concept of proximally $F$-weakly dominated pair of mappings as follows:
\begin{definition}
\label{F-weakly-dominating}
Given any $F\in\mathcal{F}$ and two nonempty subsets $S_1$ and $S_2$ of a metric space $(\metset,\met)$, a function $\psi:S_1\to S_2$ is said to $F$-weakly dominate another function $\phi:S_1\to S_2$ proximally, if there exists a real number $\tau>0$ such that
\begin{multline*}
\left. \begin{array}{rll}
       \met(a_1,\phi x_1)=\met(a_2,\phi x_2)&=\met(S_1,S_2)\\
       \met(b_1,\psi x_1)=\met(b_2,\psi x_2)&=\met(S_1,S_2)\\
     &  a_1\neq a_2
\end{array}\right\}\\\Rightarrow
\left\{\begin{array}{ll}
     &   M_\met(a_1,a_2,b_1,b_2)\neq 0\\
     &  \tau+F(\met(a_1,a_2))\leq F(M_\met(a_1,a_2,b_1,b_2))
\end{array}\right.
\end{multline*}
for all $a_1,a_2,b_1,b_2,x_1,x_2\in S_1$.
\end{definition}
\begin{proposition}
\label{proper-extension-proposition}
For nonempty subsets $S_1$ and $S_2$ of a metric space $(\metset,\met)$ and $F\in\mathcal{F}$, if $\phi:S_1\to S_2$ is $F$-dominated by another mapping ${\psi:S_1\to S_2}$ proximally, then $\phi$ is also $F$-weakly dominated by $\psi$ proximally.
\end{proposition}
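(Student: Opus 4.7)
The plan is to show that the $F$-weak dominance condition follows directly from the $F$-dominance condition by noting that $\met(b_1,b_2)$ appears as one of the four quantities inside the maximum defining $M_\met(a_1,a_2,b_1,b_2)$. Thus whatever conclusions proximal $F$-dominance gives about $\met(b_1,b_2)$ can be transferred to $M_\met(a_1,a_2,b_1,b_2)$ via the monotonicity of $F$.

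Concretely, I would start by fixing $a_1,a_2,b_1,b_2,x_1,x_2\in S_1$ satisfying the common premise
\[
\met(a_1,\phi x_1)=\met(a_2,\phi x_2)=\met(b_1,\psi x_1)=\met(b_2,\psi x_2)=\met(S_1,S_2), \quad a_1\neq a_2.
\]
Applying the proximal $F$-dominance hypothesis from Definition \ref{F-dominating} with the constant $\tau>0$ given there, I obtain $b_1\neq b_2$ and
\[
\tau+F(\met(a_1,a_2))\leq F(\met(b_1,b_2)).
\]
Since $b_1\neq b_2$ we have $\met(b_1,b_2)>0$, and by definition
\[
M_\met(a_1,a_2,b_1,b_2)\geq \met(b_1,b_2)>0,
\]
so $M_\met(a_1,a_2,b_1,b_2)\neq 0$, which is the first required conclusion.

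For the second conclusion, I would invoke property (F1): since $F$ is strictly increasing and $\met(b_1,b_2)\leq M_\met(a_1,a_2,b_1,b_2)$, we get $F(\met(b_1,b_2))\leq F(M_\met(a_1,a_2,b_1,b_2))$. Chaining this with the inequality obtained from proximal $F$-dominance yields
\[
\tau+F(\met(a_1,a_2))\leq F(M_\met(a_1,a_2,b_1,b_2)),
\]
which is exactly the inequality demanded by Definition \ref{F-weakly-dominating}, with the same $\tau$. There is no real obstacle here; the only thing to watch is that $F$ is defined on $\mathbb{R}_+$, so one needs $M_\met(a_1,a_2,b_1,b_2)>0$ for $F(M_\met(a_1,a_2,b_1,b_2))$ to be meaningful, but this has already been secured by the first step.
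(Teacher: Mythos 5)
Your proposal is correct and follows exactly the same route as the paper's (one-line) proof: apply the proximal $F$-dominance hypothesis, then transfer the inequality from $\met(b_1,b_2)$ to $M_\met(a_1,a_2,b_1,b_2)$ using $M_\met(a_1,a_2,b_1,b_2)\geq \met(b_1,b_2)$ and the strict monotonicity (F1) of $F$. You have merely written out in full the details the paper declares trivial, including the useful observation that $b_1\neq b_2$ secures $M_\met(a_1,a_2,b_1,b_2)\neq 0$.
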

\begin{proof}
The proposition follows trivially from the fact that $F$ is strictly increasing and that $M_\met(a_1,a_2,b_1,b_2)\geq \met(b_1,b_2)$ for all ${a_1,a_2,b_1,b_2\in S_1}$.
\end{proof}
Example \ref{ex22} shows that the converse of Proposition \ref{proper-extension-proposition} may not be true. Hence, for a fixed pair of subsets $(S_1,S_2)$ of a metric space $(\metset,\met)$, the class of all pairs of mappings $(\phi,\psi)$ such that $\phi$ is $F$-dominated by $\psi$ proximally for some $F\in\mathcal{F}$ is generally a proper subclass of the class of all pairs of mappings $(\phi,\psi)$ such that $\phi$ is $F$-weakly dominated by $\psi$ proximally for some $F\in\mathcal{F}$.

If $S_1=S_2=\metset$, then the concept of best proximity point reduces to the concept of fixed point. The concept of $F$-weakly dominated pair of mappings proximally as introduced in Definition \ref{F-weakly-dominating} reduces to the following.
\begin{definition}
\label{f-g-x-propostion}
Let $(\metset,\met)$ be a metric space and $F\in\mathcal{F}$. Let $\phi$ and $\psi$ be functions mapping $\metset$ into $\metset$. Then, $\psi$ is said to $F$-weakly dominate $\phi$ if there exists $\tau>0$ such that
\begin{equation*}
\label{f-g-X-condition}
    \phi x_1\neq \phi x_2\Rightarrow \left\{\begin{array}{ll}
         &  M_\met(\phi x_1,\phi x_2,\psi x_1,\psi x_2)\neq 0\\
         &  \tau + F(\met(\phi x_1,\phi x_2))\leq F(M_\met(\phi x_1,\phi x_2,\psi x_1,\psi x_2))
    \end{array}\right.
\end{equation*}
for all $x_1,x_2\in \metset$.
\end{definition}
A definition similar to the Definition \ref{f-g-x-propostion} was first introduced by Zhou \textit{et al.} in \cite{zhou2019coincidence} under the name ``\'Ciri\'c-type $F_M$-contraction", however, the authors assume the continuity of $F$ in their definition. So, Definition \ref{f-g-x-propostion} is more general and is also an improvement of Definition 3.2 in \cite{batra2017common}, which is a special case of Definition \ref{F-dominating} with $S_1=S_2=\metset.$

The following proposition gives the relationship between $F$-weak contractions and $F$-weakly dominated pair of mappings.
\begin{proposition}
Let $(\metset,\met)$ be a metric space and $F\in\mathcal{F}$. Let $\phi:\metset\to \metset$ be a mapping and $I:\metset\to \metset$ be the identity mapping defined by $Ix=x$ for all $x\in \metset$. Then, $\phi$ is $F$-weakly dominated by $I$ if and only if $\phi$ is an $F$-weak contraction.
\end{proposition}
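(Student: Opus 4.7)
The proof is essentially a definition chase, since with $I$ being the identity we have $Ix_i = x_i$, so both conditions become statements about the same quantity $M_\met(\phi x_1,\phi x_2, x_1, x_2)$. My plan is to write out what each side says after this substitution and then reconcile the one cosmetic discrepancy between them.

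First I would unfold the hypothesis that $\phi$ is $F$-weakly dominated by $I$ using Definition \ref{f-g-x-propostion}. This gives a $\tau>0$ such that whenever $\phi x_1\neq \phi x_2$, we have $M_\met(\phi x_1,\phi x_2, x_1, x_2)\neq 0$ and $\tau+F(\met(\phi x_1,\phi x_2))\leq F(M_\met(\phi x_1,\phi x_2, x_1, x_2))$. Dropping the clause ``$M_\met\neq 0$'' gives precisely the definition of an $F$-weak contraction, so the forward implication is immediate.

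For the converse I would assume $\phi$ is an $F$-weak contraction with constant $\tau>0$. The only thing missing relative to Definition \ref{f-g-x-propostion} is the statement $M_\met(\phi x_1,\phi x_2,x_1,x_2)\neq 0$ whenever $\phi x_1\neq \phi x_2$. I would justify this in one of two equivalent ways: either (i) since $F$ is only defined on $\mathbb{R}_+$, the right-hand side $F(M_\met)$ being meaningful already forces $M_\met>0$; or, more concretely, (ii) since $\phi$ is a function, $\phi x_1\neq \phi x_2$ forces $x_1\neq x_2$, hence $\met(x_1,x_2)>0$, and then inspection of the formula for $M_\met$ (which contains $\met(x_1,x_2)$ as one of the entries of the maximum when the arguments are $\phi x_1,\phi x_2,x_1,x_2$) gives $M_\met\geq \met(x_1,x_2)>0$.

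No step is really an obstacle; the only care required is to verify that with the substitution $b_i=Ix_i=x_i$, the explicit formula for $M_\met(a_1,a_2,b_1,b_2)$ yields an expression that includes $\met(x_1,x_2)$, so the automatic positivity argument in (ii) above goes through. Once this is observed, the two conditions are literally the same implication, and the proposition follows.
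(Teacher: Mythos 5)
Your argument is correct and is exactly the routine definition-unfolding that the paper itself omits (the proposition is stated there without proof as an immediate consequence of the definitions). In particular, your observation (ii) --- that $\phi x_1\neq\phi x_2$ forces $x_1\neq x_2$ and hence $M_\met(\phi x_1,\phi x_2,x_1,x_2)\geq\met(x_1,x_2)>0$ --- is precisely the point needed to recover the clause $M_\met\neq 0$ in the converse direction.
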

Therefore, the concept of $F$-weakly dominated pair of mappings can be considered a broad generalization of the concept of $F$-weak contractions.

Before proving the best proximity point theorems, we prove the following technical lemmas to avoid the repetition of arguments later.
\begin{lemma}
\label{sequence-lemma}
Let $F\in\mathcal{F}$ and $(\metset,\met)$ be a complete metric space. Consider any two nonempty subsets $S_1$ and $S_2$ of $\metset $ with $S_1^0$ as nonempty and closed. Let ${\phi,\psi:S_1\to S_2}$ be a pair of proximally commuting mappings which satisfy the following conditions:
\begin{enumerate}[(a)]
    \item $\psi$ is such that it $F$-weakly dominates $\phi$ proximally;
    \item $\phi(S_1^0)\subset S_2^0$;
    \item $\phi(S_1^0)\subset \psi(S_1^0)$
\end{enumerate}
then, there exists a sequence $\{a_i\}_{i\in\mathbb{N}}$ in $S_1^0$ which satisfies the following properties:
\begin{enumerate}
    \item $\phi a_{i}=\psi a_{i+1}$ for all $i\in\mathbb{N}$;
    \item if $a_i\neq a_{i+1}$ for all $i\in\mathbb{N}$, then there exists $a\in S_1^0$ such that $a_i\to a$.
\end{enumerate}
\end{lemma}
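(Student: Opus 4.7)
My plan is to build $\{a_i\}$ as the sequence of proximity points associated with an auxiliary Picard-style iteration $\{x_i\}$ in $S_1^0$, then exploit proximal commutativity to bootstrap property~(1), and finally run a Wardowski-style contraction argument driven by hypothesis~(a) to obtain property~(2). To start the construction, I will fix any $x_0\in S_1^0$ and pick $a_0\in S_1^0$ with $\met(a_0,\phi x_0)=\met(S_1,S_2)$; such an $a_0$ exists because $\phi x_0\in S_2^0$ by~(b). Inductively, once $x_i\in S_1^0$ has been chosen, condition~(c) delivers $x_{i+1}\in S_1^0$ with $\psi x_{i+1}=\phi x_i$, and then~(b) delivers $a_{i+1}\in S_1^0$ with $\met(a_{i+1},\phi x_{i+1})=\met(S_1,S_2)$.

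For property~(1), I apply proximal commutativity with $x=x_{i+1}$. The identities $\met(a_{i+1},\phi x_{i+1})=\met(S_1,S_2)$ (by construction) and $\met(a_i,\psi x_{i+1})=\met(a_i,\phi x_i)=\met(S_1,S_2)$ (using $\psi x_{i+1}=\phi x_i$) match the hypothesis of proximal commutativity, whose conclusion is precisely $\phi a_i=\psi a_{i+1}$.

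For property~(2), assume $a_i\neq a_{i+1}$ for every $i$. For each $i\geq 1$, I feed the tuple $(a_1,a_2,b_1,b_2,x_1,x_2)=(a_i,a_{i+1},a_{i-1},a_i,x_i,x_{i+1})$ into Definition~\ref{F-weakly-dominating}: the four required proximity equations reduce to the construction facts (using additionally $\psi x_i=\phi x_{i-1}$), and $a_i\neq a_{i+1}$ is assumed, so
\begin{equation*}
\tau+F(\met(a_i,a_{i+1}))\leq F(M_\met(a_i,a_{i+1},a_{i-1},a_i)).
\end{equation*}
A short triangle-inequality computation yields
\begin{equation*}
M_\met(a_i,a_{i+1},a_{i-1},a_i)\leq\max\{\met(a_{i-1},a_i),\met(a_i,a_{i+1})\},
\end{equation*}
and the second option would force $\tau\leq 0$, which is impossible. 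Hence $F(\met(a_i,a_{i+1}))\leq F(\met(a_{i-1},a_i))-\tau$. Iterating and invoking (F2) forces $\met(a_i,a_{i+1})\to 0$, after which the standard (F3) bookkeeping from Wardowski~\cite{wardowski2012fixed} promotes this to summability of $\met(a_i,a_{i+1})$, so $\{a_i\}$ is Cauchy. Completeness of $\metset$ together with closedness of $S_1^0$ then produces the limit $a\in S_1^0$.

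The main obstacle is property~(1): the $a_i$'s are defined only through the $\phi$-orbit of the auxiliary sequence, and yet they must satisfy the common-iteration identity $\phi a_i=\psi a_{i+1}$. Proximal commutativity is the bridge, but only when the construction is arranged so that both of its proximity hypotheses are visibly available at each step. Once this is in place, property~(2) is a routine adaptation of the Wardowski $F$-weak contraction argument, with the only genuine bookkeeping being to check which of the four terms of $M_\met$ can actually dominate.
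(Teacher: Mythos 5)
Your proposal is correct and follows essentially the same route as the paper: the auxiliary iteration $\psi x_{i+1}=\phi x_i$ combined with hypothesis (b) to produce $\{a_i\}$, proximal commutativity applied at $x_{i+1}$ to get $\phi a_i=\psi a_{i+1}$, and the Wardowski-style argument feeding the tuple $(a_i,a_{i+1},a_{i-1},a_i,x_i,x_{i+1})$ (the paper uses the symmetric permutation $(a_{i+1},a_i,a_i,a_{i-1},x_{i+1},x_i)$, which gives the same value of $M_\met$) into the proximal $F$-weak dominance condition to reduce $M_\met$ to $\max\{\met(a_{i-1},a_i),\met(a_i,a_{i+1})\}$ and conclude Cauchyness via (F2) and (F3).
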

\begin{proof}
Choose $x_0\in S_1^0$ arbitrarily. Since $\phi(S_1^0)\subset \psi(S_1^0)$, we may define a sequence $\{x_i\}_{i\in\mathbb{N}}$ of points of $S_1^0$ inductively satisfying
\begin{equation}
    \label{x_i-relation}
    \phi x_{i}=\psi x_{i+1}
\end{equation}
for all $i\in\mathbb{N}$. Since $\phi(S_1^0)\subset S_2^0$, we can find a sequence $\{a_i\}_{i\in\mathbb{N}}$ in $S_1^0$ such that
\begin{equation}
\label{x_i-u_i-relation}
    \met(\phi x_i,a_i)=\met(S_1,S_2)
\end{equation}
for all $i\in\mathbb{N}$. The equations (\ref{x_i-relation}) and (\ref{x_i-u_i-relation}) give the following system of equations:
\begin{equation}
    \label{f-g-x_i-system}
    \left. \begin{array}{cc}
     &  \met(\phi x_{i+1},a_{i+1})=\met(S_1,S_2)\\
     &  \met(\psi x_{i+1},a_i)=\met(S_1,S_2)
\end{array}\right\}
\end{equation}
for all $i\in\mathbb{N}$.
The system (\ref{f-g-x_i-system}) along with the proximal commutativity of $\phi$ and $\psi$ gives
\begin{equation*}
    \phi a_i=\psi a_{i+1}
\end{equation*}
for all $i\in\mathbb{N}$.

Assume that $a_i\neq a_{i+1}$ for all $i\in\mathbb{N}$. We claim that $\{a_i\}_{i\in\mathbb{N}}$ is a Cauchy sequence. Suppose that $i\geq 1$. The equations (\ref{x_i-relation}) and (\ref{x_i-u_i-relation}) give
\begin{equation}
\label{main-relation}
    \left. \begin{array}{cc}
     &  \met(\phi x_{i+1},a_{i+1})=\met(\phi x_i,a_i)=\met(S_1,S_2)\\
     &  \met(\psi x_{i+1},a_i)=\met(\psi x_i,a_{i-1})=\met(S_1,S_2)\\
     &\quad\quad a_i\neq a_{i+1}.
\end{array}\right\}
\end{equation}
The system (\ref{main-relation}) along with the hypothesis (a) imply that there exists $\tau>0$ such that 
\begin{equation*}
    \begin{split}
        &F(\met(a_i,a_{i+1}))\leq F(\met(M_\met(a_{i+1},a_i,a_i,a_{i-1})))-\tau\\
        &=F\biggl(\max\biggl\{\met(a_i,a_{i-1}),\met(a_i,a_{i-1}),\met(a_{i+1},a_i),\\
        &\qquad\qquad\qquad\quad\quad\qquad\qquad\quad\left.\left. \frac{\met(a_i,a_i)+\met(a_{i+1},a_{i-1})}{2}\right\}\right)-\tau\\
        &=F\left(\max\left\{\met(a_i,a_{i-1}),\met(a_{i-1},a_i),\met(a_i,a_{i+1}),\frac{\met(a_{i+1},a_{i-1})}{2}\right\}\right)-\tau\\
        &=F\left(\max\left\{\met(a_i,a_{i-1}),\met(a_i,a_{i+1}),\frac{\met(a_{i+1},a_{i-1})}{2}\right\}\right)-\tau\\
        &\leq F\left(\max\left\{\met(a_i,a_{i-1}),\met(a_i,a_{i+1}),\frac{\met(a_i,a_{i+1})+\met(a_{i-1},a_i)}{2}\right\}\right)-\tau\\
        &=F\left(\max\left\{\met(a_i,a_{i+1}),\met(a_i,a_{i-1})\right\}\right)-\tau.
    \end{split}
\end{equation*}
In above calculations we took the advantage of the monotonicity of $F$ and the triangle inequality. Hence,
\begin{equation}
\label{big-manipulation}
F(\met(a_i,a_{i+1}))\leq F(\max\{\met(a_i,a_{i+1}),\met(a_i,a_{i-1})\})-\tau.
\end{equation}
If there exists $i\in\mathbb{N},i\geq 1$ such that
$$\max\{\met(a_i,a_{i+1}),\met(a_i,a_{i-1})\}=\met(a_i,a_{i+1}),$$
then, by inequality (\ref{big-manipulation}) we obtain
$$F(\met(a_i,a_{i+1}))\leq F(\met(a_i,a_{i+1}))-\tau.$$
Which is not true.
Hence, we must have
\begin{equation}
\label{u-equation}
    \max\left\{\met(a_i,a_{i-1}),\met(a_i,a_{i+1})\right\}=\met(a_i,a_{i-1}).
\end{equation}
The inequality (\ref{big-manipulation}) and the equation (\ref{u-equation}) give
\begin{equation}
\label{important-inequality-1}
\begin{split}
    F(\met(a_i,a_{i+1})) &\leq F(\met(a_{i-1},a_i))-\tau\\
    &\leq  F(\met(a_{i-2},a_{i-1}))-2\tau\\
    &\leq F(\met(a_{i-3},a_{i-2}))-3\tau\\
    & \ldots\ldots\ldots\ldots\ldots\ldots\\
    &\leq F(\met(a_0,a_1))-i\tau
\end{split}
\end{equation}
for all $i\in\mathbb{N},i\geq 1$.
The inequality (\ref{important-inequality-1}) implies that
$$\lim_{i\to\infty}F(\met(a_i,a_{i+1}))=-\infty.$$
Using (F2) it follows that
\begin{equation*}
    \label{d_i-to-0}
    \lim_{i\to\infty}\met(a_i,a_{i+1})=0.
\end{equation*}
By (F3) there exists $k\in(0,1)$ such that
\begin{equation}
    \label{d_i-k-d-to-0}
    \lim_{i\to\infty} (\met(a_i,a_{i+1}))^kF(\met(a_i,a_{i+1}))=0.
\end{equation}
Note that
\begin{equation}
    \label{sandwitch}
    \begin{split}
        &\met(a_i,a_{i+1})^kF(\met(a_0,a_1))-\met(a_i,a_{i+1})^kF(\met(a_i,a_{i+1}))\\
        &\geq \met(a_i,a_{i+1})^kF(\met(a_0,a_1))-\met(a_i,a_{i+1})^k(F(\met(a_0,a_1))-i\tau)\\
        &= i\met(a_i,a_{i+1})^k\tau\geq 0.
    \end{split}
\end{equation}
Using (\ref{d_i-k-d-to-0}) and (\ref{sandwitch}) we obtain
\begin{equation*}
    \lim_{i\to\infty}i(\met(a_i,a_{i+1}))^k=0.
\end{equation*}
Therefore, we can find $i_0\in\mathbb{N}$ such that
\begin{equation*}
    \begin{split}
        &i(\met(a_i,a_{i+1}))^k\leq 1 \text{ for all } i\geq i_0\\
        &\Rightarrow \met(a_i,a_{i+1})\leq \frac{1}{i^{1/k}}\text{ for all } i\geq i_0.
    \end{split}
\end{equation*}
Let $j,i\in\mathbb{N}$ be such that $j>i\geq i_0$. Therefore,
\begin{equation*}
    \begin{split}
        \met(a_j,a_i)&\leq \met(a_j,a_{j-1})+\ldots + \met(a_{i+1},a_i)\\
        &< \sum_{l=i}^{\infty}\met(a_l,a_{l+1})\leq \sum_{l=i}^{\infty}\frac{1}{l^{1/k}}.
    \end{split}
\end{equation*}
Since the series $\sum_{l=1}^{\infty}\frac{1}{l^{1/k}}$ converges, it follows that $\{a_i\}_{i\in\mathbb{N}}$ is a Cauchy sequence. But $S_1^0$, being a closed subset of a complete metric space, is itself a complete metric space. Hence, there exists $a\in S_1^0$ such that
\begin{equation*}
    \lim_{i\to\infty}a_i=a.
\end{equation*}
\end{proof}
The Example \ref{ex} shows that hypothesis (a) of Lemma \ref{sequence-lemma} cannot be dropped.
\begin{example}
\label{cartesian-example}
Let $(\metset,\met)$ be a metric space. Consider the cartesian product $\metset'=\mathbb{R}\times \metset$ under the metric $$\met'((u,v),(t,w))=\sqrt{\left |u-t\right |^2+\met(v,w)^2}$$
for all $u,t\in \mathbb{R}$ and $v,w\in \metset$. Define, 
$$S_1=\{(-1,x):x\in \metset\}$$
$$S_2=\{(1,x):x\in \metset\}.$$
Let $\phi',\psi':\metset\to \metset$ be mappings. We define $\phi,\psi:S_1\to S_2$ as follows:
$$\phi(-1,x)=(1,\phi'x)$$
$$\psi(-1,x)=(1,\psi'x).$$
It is easy to check that $\phi$ and $\psi$ commute proximally if and only if $\phi'$ and $\psi'$ commute.
\end{example}
\noindent
\begin{minipage}{.5\linewidth}
\begin{example}
\label{circle-example}
Let $\metset=\mathbb{C}$ under the metric $\met(z,w)=|z-w|$. Let $0<a<b$ be real numbers. Define
$$S_1=\{z\in\mathbb{C}:|z|=a\}$$
and
$$S_2=\{z\in\mathbb{C}:|z|=b\}.$$
For any point $z$ on $S_1$, draw a diameter of $S_2$ passing through $z$ as shown in Figure \ref{circ}. The diameter cuts the circle $S_2$ at two points. Denote the point nearer to $z$ by $\phi z$ and the other point by $\psi z$. It is easy to see that the mappings $\phi,\psi:S_1\to S_2$ commute proximally.
\end{example}
\end{minipage}\hfill
\begin{minipage}{.45\linewidth}
\begin{center}
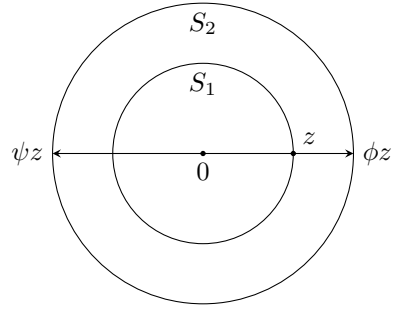

\begin{tikzpicture}[scale=0.40]
\draw (0,0) circle (5cm);
\draw (0,0) circle (3cm);
\draw [->,>=stealth](3,0) -- (5,0);
\draw [->,>=stealth](3,0) -- (-5,0);
\draw[fill=black] (0,0) circle (2pt);
\draw[fill=black] (3,0) circle (2pt);
\node (A) at (0,3) [below]{$S_1$};
\node (B) at (0,5) [below]{$S_2$};
\node (o) at (0,0) [below]{$0$};
\node (z) at (3,0) [above right]{$z$};
\node (fz) at (5,0) [right]{$\phi z$};
\node (gz) at (-5,0) [left]{$\psi z$};
\end{tikzpicture}
\captionof{figure}{Geometrical description of the mappings $\phi$ and $\psi$}
\label{circ}
\end{center}
\end{minipage}
\begin{example}\label{ex}
Let $\metset, \met, S_1, S_2, \phi, \psi$ be as in Example \ref{circle-example}. Clearly, in this case $S_1=S_1^0$ and $S_2=S_2^0$. It was discussed in Example \ref{circle-example} that $\phi$ and $\psi$ commute proximally. It is easy to check that the hypotheses (b) and (c) of Lemma \ref{sequence-lemma} are also satisfied. To see that the hypothesis (a) is not satisfied, let $z_1$ and $z_2$ be points in $S_1$ that lie on the opposite ends of a diameter of $S_1$. Take $a_1=b_2=z_1$ and $a_2=b_1=z_2$. We have
$$
\left.\begin{array}{cc}
     &  \met(a_1,\phi z_1)=\met(a_2,\phi z_2)=\met(S_1,S_2)\\
     &  \met(b_1,\psi z_1)=\met(b_2,\psi z_2)=\met(S_1,S_2)\\
     &  \quad\quad a_1\neq a_2.
\end{array}\right\}
$$
Let if possible $\phi$ be $F$-weakly dominated by $\psi$ proximally for some $F\in\mathcal{F}$. Then, we can find some $\tau>0$ such that
\begin{equation*}
    \begin{split}
    \tau+F(\met(a_1,a_2)) & \leq  F(M_\met(a_1,a_2,b_1,b_2))\\
    &= F(\met(a_1,a_2)).
   \end{split}
\end{equation*}
Which is a contradiction. 

Let, if possible, the conclusion of Lemma \ref{sequence-lemma} be satisfied. Then, there exists a sequence $\{a_i\}_{i\in\mathbb{N}}$ in $S_1$ such that
$$\phi a_i=\psi a_{i+1}$$
for all $i\in\mathbb{N}$. Since $\phi$ and $\psi$ cannot satisfy the condition $\phi a=\psi a$ for any $a\in S_1$, hence, $a_i\neq a_{i+1}$ for all $i\in\mathbb{N}$. Thus, the sequence $\{a_i\}_{i\in\mathbb{N}}$ must converge to some $a\in S_1$. But, since $\phi$ and $\psi$ are continuous, this implies that $\phi a=\psi a$, which is again a contradiction.
\end{example}
\begin{lemma}
\label{coincidence-lemma}
Let $S_1$ and $S_2$ be nonempty subsets of a metric space $(\metset,\met)$. Let $F\in\mathcal{F}$ and the mappings $\phi,\psi:S_1\to S_2$ be a pair of proximally commuting mappings which satisfy the following conditions:
\begin{enumerate}[(a)]
    \item $\psi$ is such that it $F$-weakly dominates $\phi$ proximally;
    \item $\phi(S_1^0)\subset S_2^0$;
    \item there exists $x\in S_1^0$ such that $\phi x=\psi x$
\end{enumerate}
then, $\phi$ and $\psi$ have a unique common best proximity point.
\end{lemma}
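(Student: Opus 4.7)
The plan is to iterate from the given coincidence point to produce a sequence of coincidence points, to show it must become stationary within two steps by exploiting the $F$-weak dominance, and then to read off a common best proximity point from the stationary value. Uniqueness will fall out of another direct application of the $F$-weak dominance inequality.

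Starting from $x_0 := x \in S_1^0$ with $\phi x_0 = \psi x_0$, I would inductively build a sequence $\{x_n\} \subset S_1^0$ as follows. Given $x_n \in S_1^0$ with $\phi x_n = \psi x_n$, hypothesis (b) places $\phi x_n$ in $S_2^0$, so I can select $x_{n+1} \in S_1^0$ with $\met(x_{n+1}, \phi x_n) = \met(S_1, S_2)$. The coincidence $\phi x_n = \psi x_n$ also yields $\met(x_{n+1}, \psi x_n) = \met(S_1, S_2)$, whereupon proximal commutativity applied with $a = b = x_{n+1}$ would force $\phi x_{n+1} = \psi x_{n+1}$, closing the induction.

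The next step is to rule out $x_1 \neq x_2$. Assuming otherwise, I would apply Definition \ref{F-weakly-dominating} with $x_1 \mapsto x_1$, $x_2 \mapsto x_0$ and $a_1 = b_1 = x_2$, $a_2 = b_2 = x_1$; each of the four distance equalities holds by construction combined with $\phi x_i = \psi x_i$. A direct evaluation should give $M_\met(x_2, x_1, x_2, x_1) = \met(x_1, x_2)$, so the conclusion would collapse to
$$\tau + F(\met(x_1, x_2)) \leq F(\met(x_1, x_2)),$$
contradicting $\tau > 0$. This forces $x_1 = x_2$, and then $\met(x_1, \phi x_1) = \met(x_2, \phi x_1) = \met(S_1, S_2)$ together with $\phi x_1 = \psi x_1$ would identify $x_1$ as the desired common best proximity point. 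Uniqueness would follow by the same pattern: for distinct common best proximity points $x^*, y^*$, setting $x_1 \mapsto x^*$, $x_2 \mapsto y^*$ and $a_1 = b_1 = x^*$, $a_2 = b_2 = y^*$ produces the same impossible inequality.

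The only real obstacle I foresee is bookkeeping: verifying at each invocation of proximal commutativity and of $F$-weak dominance that the chosen representatives satisfy all four prescribed distance equalities and genuinely lie in $S_1^0$. The consistent choices $b_i = a_i$, combined with $\phi x_n = \psi x_n$ along the iteration, should reduce this to a routine check, so no completeness or closedness assumption (as in Lemma \ref{sequence-lemma}) is needed.
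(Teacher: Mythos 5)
Your proposal is correct and follows essentially the same route as the paper: your $x_0, x_1, x_2$ are the paper's $x, a, b$, the induction via proximal commutativity with $a=b=x_{n+1}$ matches the paper's derivation of $\phi a=\psi a$, and both the stationarity argument ($x_1=x_2$, via $M_\met(x_2,x_1,x_2,x_1)=\met(x_1,x_2)$) and the uniqueness argument coincide with the paper's. The only cosmetic difference is that you phrase it as an infinite iteration that stabilizes after two steps, whereas the paper constructs only the two points it needs.
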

\begin{proof}
By hypothesis (b), hypothesis (c) and the definition of $S_2^0$, there exists some $a\in S_1^0$ such that
\begin{equation}
\label{v-equation}
    \left. \begin{array}{cc}
     &  \met(a,\phi x)=\met(S_1,S_2)\\
     &  \met(a,\psi x)=\met(S_1,S_2).
\end{array}\right\}
\end{equation}
The proximal commutativity of $\phi$ and $\psi$ along with the system (\ref{v-equation}) implies
$$\phi a=\psi a.$$
Again, as $\phi(S_1^0)\subset S_2^0$, we can find $b\in S_1^0$ such that
\begin{equation}
\label{w-euation}
    \left. \begin{array}{cc}
     &  \met(b,\phi a)=\met(S_1,S_2)\\
     &  \met(b,\psi a)=\met(S_1,S_2).
\end{array}\right\}
\end{equation}
We claim that $b=a$. Suppose not, hence from (\ref{v-equation}) and (\ref{w-euation}) we obtain the following system
\begin{equation}
    \label{v-w-equality-system}
    \left. \begin{array}{cc}
     &  \met(a,\phi x)=\met(b,\phi a)=\met(S_1,S_2)\\
     &  \met(a,\psi x)=\met(b,\psi a)=\met(S_1,S_2)\\
     &  \quad \quad a\neq b.
\end{array}\right\}
\end{equation}
System (\ref{v-w-equality-system}) along with hypothesis (a) implies that there exists some $\tau>0$ such that
$$ \tau+F(\met(a,b))\leq F(M_\met(a,b,a,b))=F(\met(a,b))
$$
which is not true. So $a=b$. Hence, equations (\ref{w-euation}) give $$\met(a,\phi a)=\met(a,\psi a)=\met(S_1,S_2).$$ Thus, $a$ is a common best proximity point of $\phi$ and $\psi$.

For uniqueness, let $a_i$(i=1,2) be common best proximity points of $\phi$ and $\psi$ such that $a_1\neq a_2$. Thus, we obtain the following system of equations:
\begin{equation}
\label{uniqueness-system}
    \left. \begin{array}{cc}
     &  \met(a_1,\phi a_1)=\met(a_2,\phi a_2)=\met(S_1,S_2)\\
     &  \met(a_1,\psi a_1)=\met(a_2,\psi a_2)=\met(S_1,S_2)\\
     &  \quad \quad a_1\neq a_2.
\end{array}\right\}
\end{equation}
System (\ref{uniqueness-system}) along with hypothesis (a) implies that
$$\tau+F(\met(a_1,a_2))\leq F(M_\met(a_1,a_2,a_1,a_2))=F(\met(a_1,a_2))$$
which is not true. Hence, we must have $a_1=a_2$.
\end{proof}
For the mappings $\phi,\psi:S_1\to S_2$, a point $x\in S_1$ which satisfies $\phi x=\psi x$ is called a coincidence point of $\phi$ and $\psi$. There is a vast amount of literature available on coincidence point theorems. If we add the hypotheses of those coincidence point theorems in Lemma \ref{coincidence-lemma}, we obtain a corresponding common best proximity point theorem. However, the hypotheses of Lemma \ref{coincidence-lemma} are still quite strong, so it may happen that no interesting examples of the resulting theorems exist. One may think that we can drop some hypothesis from it to obtain a more general result. The following example shows that at least hypothesis (a) of Lemma \ref{coincidence-lemma} cannot be dropped.
\begin{example}
Consider $\mathbb{R}$ under the usual Euclidean metric $d$. Define $\phi,\psi:\mathbb{R}\to\mathbb{R}$ as
$$\phi x=\begin{cases}-\frac{1}{x}& x\neq 0\\\quad 1& x=0\end{cases}$$
$$\psi x=\begin{cases}-\frac{1}{x^3}&x\neq 0\\\quad 1 & x=0\end{cases}$$
for all $x\in\mathbb{R}$.
With $S_1=S_2=\mathbb{R}$, it is easy to see that $\phi$ and $\psi$ commute proximally, and they also satisfy the hypotheses (b) and (c) of Lemma \ref{coincidence-lemma}. Note that $\phi$ and $\psi$ does not have any common best proximity point (i.e. no common fixed point in this case).

We claim that the hypothesis (a) is not satisfied. Suppose that the condition is satisfied. Note that $\phi(0)\neq \phi(1)$. If $\phi$ is $F$-weakly dominated by $\psi$ for some $F\in\mathcal{F}$, then by Definition \ref{f-g-x-propostion} there exists $\tau>0$ such that
\begin{equation*}
    \begin{split}
        &\tau+F(|\phi(0)-\phi(1)|)\leq F(M_\met(\phi(0),\phi(1),\psi(0),\psi(1)))\\
        &\Rightarrow \tau + F(2)\leq F(2)
    \end{split}
\end{equation*}
which is a contradiction.
\end{example}
It may be interesting to look for other conditions on $\phi$ and $\psi$ which guarantee that the existence of a coincidence point implies the existence of a common best proximity point.

By observing the statements of Lemma \ref{sequence-lemma} and Lemma \ref{coincidence-lemma}, one may notice that if $\phi,\psi:S_1\to S_2$ satisfy all the hypotheses of Lemma \ref{sequence-lemma} along with some additional hypotheses on $\phi$ and $\psi$ which guarantee that the point $a$ given by Lemma \ref{sequence-lemma} satisfies $\phi a=\psi a$, then by Lemma \ref{coincidence-lemma}, $\phi$ and $\psi$ will have a unique common best proximity point. One such hypothesis is to assume that $\phi{_{|S_1^0}}$ is continuous and $\psi{_{|S_1^0}}$ is closed.
\begin{theorem}
\label{closed-continuous}
Let $F\in\mathcal{F}$ and $(\metset,\met)$ be a complete metric space. Consider any two nonempty subsets $S_1$ and $S_2$ of $\metset $ with $S_1^0$ as nonempty and closed. Let $\phi,\psi:S_1\to S_2$ be a pair of proximally commuting mappings which satisfy the following conditions:
\begin{enumerate}[(a)]
    \item $\psi$ is such that it $F$-weakly dominates $\phi$ proximally;
    \item $\phi{_{|S_1^0}}$ is continuous and $\psi{_{|S_1^0}}$ is closed;
    \item $\phi(S_1^0)\subset S_2^0$;
    \item $\phi(S_1^0)\subset \psi(S_1^0)$
\end{enumerate}
then, $\phi$ and $\psi$ have a unique common best proximity point.
\end{theorem}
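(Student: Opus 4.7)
The plan is to combine Lemma \ref{sequence-lemma} and Lemma \ref{coincidence-lemma}. All hypotheses of Lemma \ref{sequence-lemma} are exactly the hypotheses (a), (c), (d) of this theorem together with proximal commutativity and the standing assumptions on $(\metset,\met)$ and $S_1^0$. So I would begin by invoking Lemma \ref{sequence-lemma} to obtain a sequence $\{a_i\}_{i\in\mathbb{N}}$ in $S_1^0$ satisfying $\phi a_i=\psi a_{i+1}$ for all $i\in\mathbb{N}$. The strategy is then to produce a point $a\in S_1^0$ with $\phi a=\psi a$, because Lemma \ref{coincidence-lemma} (whose hypotheses (a), (b) are granted here and whose hypothesis (c) will then be fulfilled by such an $a$) immediately yields the unique common best proximity point.

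I would split into two cases depending on whether $\{a_i\}$ eventually stabilizes. If there exists some $i\in\mathbb{N}$ with $a_i=a_{i+1}$, then from $\phi a_i=\psi a_{i+1}$ we obtain $\phi a_i=\psi a_i$, so $a_i\in S_1^0$ is itself a coincidence point of $\phi$ and $\psi$. Otherwise $a_i\neq a_{i+1}$ for every $i$, and conclusion (2) of Lemma \ref{sequence-lemma} supplies a point $a\in S_1^0$ with $a_i\to a$.

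In this second case I would extract the coincidence point using hypothesis (b). Continuity of $\phi_{|S_1^0}$ together with $a_i\to a$ in $S_1^0$ gives $\phi a_i\to\phi a$. Since $\psi a_{i+1}=\phi a_i$, this means $\psi a_{i+1}\to\phi a$ while $a_{i+1}\to a$. Hypothesis (d) yields $\phi a\in\phi(S_1^0)\subset\psi(S_1^0)$, so $\phi a$ lies in the codomain of the restricted map $\psi_{|S_1^0}:S_1^0\to\psi(S_1^0)$. The closedness of $\psi_{|S_1^0}$ therefore forces $\psi a=\phi a$. Applying Lemma \ref{coincidence-lemma} to the point $a\in S_1^0$ finishes the proof.

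The main obstacle I anticipate is not a genuine difficulty but careful bookkeeping of the roles of the various hypotheses, especially verifying that the limit $\phi a$ truly belongs to $\psi(S_1^0)$ so that the closedness of the restricted map $\psi_{|S_1^0}$ can legitimately be invoked; without hypothesis (d) this step would fail. Everything else is an assembly of the two preceding lemmas, with no new estimates required.
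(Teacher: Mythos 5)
Your proposal is correct and follows essentially the same route as the paper: invoke Lemma \ref{sequence-lemma} to build the sequence with $\phi a_i=\psi a_{i+1}$, split on whether the sequence stabilizes, use continuity of $\phi_{|S_1^0}$ and closedness of $\psi_{|S_1^0}$ (with hypothesis (d) ensuring $\phi a\in\psi(S_1^0)$) to extract a coincidence point, and finish with Lemma \ref{coincidence-lemma}. No gaps.
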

\begin{proof}
By Lemma \ref{sequence-lemma}, we can find a sequence $\{a_i\}_{i\in\mathbb{N}}$ in $S_1^0$ which satisfies
\begin{equation}
\label{u_i-imp-relation}
    \phi a_i=\psi a_{i+1}
\end{equation}
for all $i\in\mathbb{N}$. If $$a_i=a_{i+1}=a$$ for some $i\in\mathbb{N}$, then from (\ref{u_i-imp-relation}) we get $$\phi a=\psi a.$$ Assume now that $a_i\neq a_{i+1}$ for all $i\in\mathbb{N}$. Then, by Lemma \ref{sequence-lemma}, there exists $a\in S_1^0$ such that the sequence $\{a_i\}_{i\in\mathbb{N}}$ converges to $a$.

Since $\phi{_{|S_1^0}}$ is continuous, we obtain
$$\lim_{i\to\infty}\phi a_i=\lim_{i\to\infty}\psi a_{i+1}=\phi a\in \phi(S_1^0)\subset \psi(S_1^0).$$
Since $\psi{_{|S_1^0}}$ is closed we must have $$\psi a=\phi a.$$ Therefore, by Lemma \ref{coincidence-lemma}, $\phi$ and $\psi$ have a unique common best proximity point.
\end{proof}
Another condition would be to assume that $\psi{_{|S_1^0}}$ is injective (i.e. $\psi x_1=\psi x_2\Rightarrow x_1=x_2$ for all $x_1,x_2\in S_1^0$) and the mapping $~{\xi:S_1^0\to S_1^0}$ given by $\xi=(\psi{_{|S_1^0}})^{-1}\circ \phi{_{|S_1^0}}$ is closed.
\begin{theorem}
Let $F\in\mathcal{F}$ and $(\metset,\met)$ be a complete metric space. Consider any two nonempty subsets $S_1$ and $S_2$ of $\metset $ with $S_1^0$ as nonempty and closed. Let $\phi,\psi:S_1\to S_2$ be a pair of proximally commuting mappings which satisfy the following conditions:
\begin{enumerate}[(a)]
    \item $\psi$ is such that it $F$-weakly dominates $\phi$ proximally;
    \item $\psi{_{|S_1^0}}$ is injective;
    \item the mapping $\xi:S_1^0\to S_1^0$ given by $\xi=(\psi{_{|S_1^0}})^{-1}\circ \phi{_{|S_1^0}}$ is closed;
    \item $\phi(S_1^0)\subset S_2^0$;
    \item $\phi(S_1^0)\subset \psi(S_1^0)$
\end{enumerate}
then, $\phi$ and $\psi$ have a unique common best proximity point.
\end{theorem}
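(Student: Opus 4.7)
The plan is to follow essentially the same template as Theorem \ref{closed-continuous}: apply Lemma \ref{sequence-lemma} to manufacture a candidate sequence, identify its limit with a coincidence point of $\phi$ and $\psi$, and then invoke Lemma \ref{coincidence-lemma} to upgrade this coincidence point to a unique common best proximity point. The only substantive difference is that the step ``limit of the sequence is a coincidence point'' is now driven not by continuity of $\phi_{|S_1^0}$ plus closedness of $\psi_{|S_1^0}$ (as in Theorem \ref{closed-continuous}), but by the closedness of the single auxiliary self-map $\xi$ of $S_1^0$, after using the injectivity of $\psi_{|S_1^0}$ to make sense of $\xi$.

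Concretely, hypotheses (a), (d), (e) of our theorem are exactly hypotheses (a), (b), (c) of Lemma \ref{sequence-lemma}, and $S_1^0$ is assumed nonempty and closed, so the lemma yields a sequence $\{a_i\}_{i\in\mathbb{N}}$ in $S_1^0$ satisfying $\phi a_i = \psi a_{i+1}$ for all $i$. If there exists some $i$ with $a_i = a_{i+1}$, then this relation already gives $\phi a_i = \psi a_i$, and Lemma \ref{coincidence-lemma} applied with $x = a_i$ finishes the proof. Otherwise, Lemma \ref{sequence-lemma} guarantees a point $a \in S_1^0$ with $a_i \to a$.

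The central new observation is that (b) and (e) together make $\xi = (\psi_{|S_1^0})^{-1} \circ \phi_{|S_1^0}$ well defined as a map $S_1^0 \to S_1^0$, and that the defining relation $\phi a_i = \psi a_{i+1}$ rewrites, by injectivity of $\psi_{|S_1^0}$, as $\xi a_i = a_{i+1}$. Hence simultaneously $a_i \to a$ and $\xi a_i = a_{i+1} \to a$ in the (complete) metric subspace $S_1^0$. Invoking the closedness of $\xi$ from hypothesis (c), this forces $\xi a = a$, i.e.\ $\psi a = \phi a$. At this point all hypotheses of Lemma \ref{coincidence-lemma} are in force (hypotheses (a), (b) of that lemma are our (a), (d), and (c) is the coincidence we just derived), and the lemma delivers the unique common best proximity point.

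The main conceptual hurdle is recognizing that the sole purpose of the closed/continuous pair in Theorem \ref{closed-continuous} was to pass from $\phi a_i = \psi a_{i+1}$ and $a_i \to a$ to $\phi a = \psi a$; once one notices that injectivity of $\psi_{|S_1^0}$ turns this identity into $\xi a_i = a_{i+1}$, closedness of the \emph{single} map $\xi$ becomes precisely the right hypothesis to extract the coincidence in the limit. Everything else is a direct appeal to the two lemmas, and no new Cauchy or contraction-type estimate is required.
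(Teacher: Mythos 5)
Your proposal is correct and follows essentially the same route as the paper's own proof: invoke Lemma \ref{sequence-lemma} to obtain the sequence with $\phi a_i=\psi a_{i+1}$, rewrite this as $\xi a_i=a_{i+1}$ via injectivity of $\psi{_{|S_1^0}}$, use closedness of $\xi$ to get a fixed point $\xi a=a$ (i.e.\ a coincidence point $\phi a=\psi a$), and finish with Lemma \ref{coincidence-lemma}. No discrepancies to report.
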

\begin{proof}
We claim that there exists $a\in S_1^0$ such that $\phi a=\psi a$. Clearly, it is sufficient to show that the mapping $\xi:S_1^0\to S_1^0$ has a fixed point in $S_1^0$.

By Lemma \ref{sequence-lemma}, there exists a sequence $\{a_i\}_{i\in\mathbb{N}}$ in $S_1^0$ such that
\begin{equation}
\label{prep}
    \begin{split}
        & \phi a_i=\psi a_{i+1}\\
        &\Rightarrow \xi a_i=a_{i+1}
    \end{split}
\end{equation}
for all $i\in\mathbb{N}$. If $a_i=a_{i+1}=a$ for some $i\in\mathbb{N}$, then, by equation (\ref{prep}) $\xi a=a$. Suppose that $a_i\neq a_{i+1}$ for all $i\in\mathbb{N}$. By Lemma \ref{sequence-lemma}, there exists $a\in S_1^0$ such that
$$\lim_{i\to\infty}a_i=a.$$
Since $\xi$ is closed, from equation (\ref{prep}) we obtain $\xi a=a$.
\end{proof}

If $S_1=S_2=\metset$ in Theorem \ref{closed-continuous}, we obtain the following common fixed point theorem. Similar theorems were studied by Zhou \textit{et al.} in \cite{zhou2019coincidence}. It is also an improvement of Theorem 3.9 in \cite{batra2017common}.
\begin{theorem}
Consider an $F\in\mathcal{F}$ and a complete metric space $(\metset,\met)$. Let $\phi,\psi:\metset\to \metset$ be a pair of commuting mappings which satisfy the following conditions:
\begin{enumerate}[(a)]
    \item $\psi$ is such that it $F$-weakly dominates $\phi$;
    \item $\phi$ is continuous and $\psi$ is closed;
    \item $\phi(\metset)\subset \psi(\metset)$
\end{enumerate}
then, $\phi$ and $\psi$ have a unique common fixed point.
\end{theorem}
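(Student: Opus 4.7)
The plan is to deduce this theorem as a direct specialization of Theorem \ref{closed-continuous} with $S_1 = S_2 = \metset$. First I would verify that in this setting all the hypotheses of Theorem \ref{closed-continuous} are satisfied as translations of the hypotheses given here.

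When $S_1 = S_2 = \metset$, we have $\met(S_1,S_2) = 0$, and consequently $S_1^0 = S_1 = \metset$ and $S_2^0 = S_2 = \metset$; in particular $S_1^0$ is nonempty and closed. The conditions $\phi(S_1^0) \subset S_2^0$ become trivial, so hypothesis (c) of Theorem \ref{closed-continuous} is automatic. The containment $\phi(\metset) \subset \psi(\metset)$ assumed here is precisely hypothesis (d). Since $S_1^0 = \metset$, the restrictions $\phi_{|S_1^0}$ and $\psi_{|S_1^0}$ coincide with $\phi$ and $\psi$ themselves, so hypothesis (b) of Theorem \ref{closed-continuous} follows from hypothesis (b) here.

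Next, as the paper notes after the definition of proximal commutativity, when $S_1 = S_2$ the notion of proximally commuting mappings reduces to ordinary commutativity, so the commuting pair $\phi,\psi$ here is proximally commuting. Similarly, as observed in the discussion of Definition \ref{f-g-x-propostion}, the concept of $\psi$ $F$-weakly dominating $\phi$ proximally reduces to $\psi$ $F$-weakly dominating $\phi$ in the sense of Definition \ref{f-g-x-propostion} when $S_1 = S_2 = \metset$; this gives hypothesis (a) of Theorem \ref{closed-continuous}.

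Therefore all hypotheses of Theorem \ref{closed-continuous} hold, and it produces a unique common best proximity point $x^* \in S_1 = \metset$, meaning $\met(x^*,\phi x^*) = \met(x^*,\psi x^*) = \met(S_1,S_2) = 0$. This forces $\phi x^* = \psi x^* = x^*$, so $x^*$ is the unique common fixed point of $\phi$ and $\psi$. There is no real obstacle here; the only thing to be careful about is making the three translations (proximal commutativity $\to$ commutativity, proximal $F$-weak dominance $\to$ $F$-weak dominance in the sense of Definition \ref{f-g-x-propostion}, and best proximity point $\to$ fixed point) completely explicit so that the reduction is unambiguous.
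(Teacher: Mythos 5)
Your proposal is correct and is exactly the route the paper takes: the theorem is stated as the specialization of Theorem \ref{closed-continuous} to $S_1=S_2=\metset$, using the facts (already noted in the paper) that $S_1^0=S_2^0=\metset$, that proximal commutativity and proximal $F$-weak dominance reduce to their ordinary counterparts, and that a common best proximity point with $\met(S_1,S_2)=0$ is a common fixed point. Your write-up simply makes these translations explicit, which the paper leaves implicit.
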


The following result due to R. Batra can be obtained from Theorem \ref{closed-continuous}:
\begin{cor}[\cite{batracommon}]
\label{sir-theorem}
Let $F\in\mathcal{F}$ and $(\metset,\met)$ be a complete metric space. Consider any two nonempty subsets $S_1$ and $S_2$ of $\metset $ with $S_1^0$ as non-empty and closed. Let $\phi,\psi:S_1\to S_2$ be a pair of proximally commuting mappings which satisfy the following conditions:
\begin{enumerate}[(a)]
    \item $\psi$ is such that it $F$-dominates $\phi$ proximally;
    \item $\psi$ and $\phi$ both are continuous;
    \item $\phi(S_1^0)\subset S_2^0$;
    \item $\phi(S_1^0)\subset \psi(S_1^0)$
\end{enumerate}
then, $\phi$ and $\psi$ have a unique common best proximity point.
\end{cor}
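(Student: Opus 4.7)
The plan is to derive Corollary \ref{sir-theorem} as a direct specialization of Theorem \ref{closed-continuous}. The key observation is that every hypothesis of the corollary is at least as strong as the corresponding hypothesis of that theorem, so the conclusion transfers verbatim. There are really only two places where the two hypothesis lists differ in formulation, and both differences are routine implications already recorded earlier in the paper.

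First, I would invoke Proposition \ref{proper-extension-proposition}. Hypothesis (a) of the corollary asserts that $\psi$ proximally $F$-dominates $\phi$, and that proposition tells us this implies $\psi$ proximally $F$-weakly dominates $\phi$. This is exactly hypothesis (a) of Theorem \ref{closed-continuous}. The point to be careful about here is the direction of the implication: one must check that the proposition indeed goes from $F$-dominance to $F$-weak dominance, which it does, since $M_\met(a_1,a_2,b_1,b_2)\geq \met(b_1,b_2)$ and $F$ is strictly increasing.

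Second, I would use Remark \ref{continuous-are-closed}. Hypothesis (b) of the corollary asserts that $\phi$ and $\psi$ are continuous on $S_1$. Their restrictions $\phi_{|S_1^0}$ and $\psi_{|S_1^0}$ are therefore also continuous, and by the remark every continuous mapping is closed. Hence $\phi_{|S_1^0}$ is continuous and $\psi_{|S_1^0}$ is closed, which is exactly hypothesis (b) of Theorem \ref{closed-continuous}. The remaining hypotheses (c) and (d) of the corollary coincide word for word with hypotheses (c) and (d) of the theorem, and proximal commutativity is assumed on both sides.

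With all four hypotheses of Theorem \ref{closed-continuous} verified, the theorem applies and delivers the existence and uniqueness of a common best proximity point of $\phi$ and $\psi$, which is the desired conclusion. There is no conceptual obstacle: the corollary is strictly weaker in scope than the main theorem, and the reduction is essentially bookkeeping. The only aspect that required any checking was aligning the quantifiers in Proposition \ref{proper-extension-proposition} correctly, and this is immediate from the form in which the proposition was stated.
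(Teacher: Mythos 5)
Your proposal is correct and follows exactly the paper's own proof: apply Proposition \ref{proper-extension-proposition} to upgrade proximal $F$-dominance to proximal $F$-weak dominance, note via Remark \ref{continuous-are-closed} that the continuous restriction $\psi_{|S_1^0}$ is closed, and then invoke Theorem \ref{closed-continuous}. No issues.
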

\begin{proof}
If $\phi$ and $\psi$ are continuous, then $\phi{_{|S_1^0}}$ and $\psi{_{|S_1^0}}$ are both continuous. By Remark \ref{continuous-are-closed}, $\psi{_{|S_1^0}}$ is closed. By Proposition \ref{proper-extension-proposition}, $\phi$ is $F$-weakly dominated by $\psi$ proximally. Hence, by Theorem \ref{closed-continuous}, $\phi$ and $\psi$ have a unique common best proximity point.
\end{proof}
The following example shows that Theorem \ref{closed-continuous} is a proper generalization of Corollary \ref{sir-theorem}.
\begin{example}\label{ex22}
Take $\metset=\{x\in\mathbb{R}:x\leq -1\text{ or }x\geq 1\}$ in Example \ref{cartesian-example} with the Euclidean metric $\met(x,y)=|x-y|$, so that $(\metset',\met')$ becomes a closed subset of the complete metric space $\mathbb{R}^2$ under the usual Euclidean metric of $\mathbb{R}^2$. Take $\phi',\psi':\metset\to \metset$ as follows:
$$\phi'x=\begin{cases}
3 & x\leq -1\\
5 & x\geq 1
\end{cases}$$
and
$$\psi'x=x.$$
Let $S_1, S_2, \phi$ and $\psi$ be as defined in Example \ref{cartesian-example}. Note that $\met(S_1,S_2)=2$. Clearly $S_1=S_1^0$ and $S_2=S_2^0$. Since $\phi'$ and $\psi'$ commute, hence, as we discussed in Example \ref{cartesian-example}, $\phi$ and $\psi$ commute proximally. Also, $~{\phi(S_1^0)\subset S_2^0}$ and $\phi(S_1^0)\subset \psi(S_1^0)$. Clearly, $\phi{_{|S_1^0}}$ and $\psi{_{|S_1^0}}$ are both continuous (and thus, $\psi{_{|S_1^0}}$ is closed).

First, we show that $\phi$ is $F_1$-weakly dominated by $\psi$ proximally, where $F_1$ is the natural logarithm function. To see this, suppose that $(-1,a_i),(-1,b_i),$ $(-1,x_i)\in S_1(i=1,2)$ be the points which satisfy the equations:
\begin{equation}
\label{D-equation-set}
    \left. \begin{array}{cc}
     &  \met'((-1,a_1),\phi(-1,x_1))=\met'((-1,a_2),\phi(-1,x_2))=2\\
     &  \met'((-1,b_1),\psi(-1,x_1))=\met'((-1,b_2),\psi(-1,x_2))=2\\
     &  \quad \quad (-1,a_1)\neq (-1,a_2).
\end{array}\right\}
\end{equation}
Equations (\ref{D-equation-set}) imply that
\begin{equation}
    \label{D-equation-implication}
    \left. \begin{array}{cc}
     &  a_1=\phi'b_1,a_2=\phi'b_2\\
     &  a_1\neq a_2.
\end{array}\right\}
\end{equation}
Clearly, equations(\ref{D-equation-implication}) imply that either $$b_1\leq -1,b_2\geq 1 \text{ or } b_2\leq -1, b_1\geq 1.$$
First, we assume that $b_1\leq -1$ and $b_2\geq 1$. It follows that
\begin{equation}
\label{D-bounded}
    \begin{split}
        &\frac{\met'((-1,a_1),(-1,a_2))}{M_{\met'}((-1,a_1),(-1,a_2),(-1,b_1),(-1,b_2))}\\
        &=\frac{\met(a_1,a_2)}{M_\met(a_1,a_2,b_1,b_2)}\\
        &\leq\frac{\met(a_1,a_2)}{\met(a_1,b_1)}=\frac{\met(\phi'b_1,\phi'b_2)}{\met(\phi'b_1,b_1)}\\
        &= \frac{|3-5|}{|3-b_1|}=\frac{2}{|3-b_1|}\leq \frac{2}{4}= \frac{1}{2}.
    \end{split}
\end{equation}
If $b_2\leq -1$ and $b_1\geq 1$, we can prove that the inequality (\ref{D-bounded}) still holds by using $M_\met(a_1,a_2,b_1,b_2)\geq \met(a_2,b_2)$. Inequality (\ref{D-bounded}) gives,
\begin{multline*}
\ln(2)+\ln(\met'((-1,a_1),(-1,a_2)))\\\leq \ln(M_{\met'}((-1,a_1),(-1,a_2),(-1,b_1),(-1,b_2))).
\end{multline*}
Hence, $\phi$ is $F_1$-weakly dominated by $\psi$ proximally with $\tau=\ln(2)$.
Now, we claim that $\phi$ is not $F$-dominated by $\psi$ proximally for any $F\in\mathcal{F}$. Note that $a_1=3, a_2=5,b_1=x_1=-1, b_2=x_2=1$ satisfy the system of equations (\ref{D-equation-set}). If $\phi$ is $F$ dominated by $\psi$ proximally for some $F\in\mathcal{F}$, then there must exist some $\tau>0$ such that
\begin{equation*}
\begin{split}
    &\tau+F(\met'((-1,3),(-1,5)))\leq F(\met'((-1,-1),(-1,1)))\\
    &\Rightarrow \tau+F(2)\leq F(2)\\
\end{split}
\end{equation*}
which is a contradiction.
Of course, the unique common best proximity point of $\phi$ and $\psi$, as guaranteed by Theorem \ref{closed-continuous} is $(-1,5)$.
\end{example}
\section{Acknowledgement}As a Ph.D. student, the first author is grateful to the University of Delhi and the Government of India for awarding him with a Non N.E.T. fellowship.   
\section{Conflict of interest} Authors declare that there is no conflict of interest regarding the publication of this article.
\section{Authors' contribution} Both authors contributed equally in writing this article. Both authors read and approved
the final manuscript.
\printbibliography
\end{document}